\definecolor{darkgreen}{rgb}{0,0.5,0}
\definecolor{darkblue}{rgb}{0,0,0.8}
\definecolor{darkred}{rgb}{0.8,0,0}
\newtheorem{theorem}{Theorem}[section]
\newtheorem{lemma}[theorem]{Lemma}
\newtheorem{proposition}[theorem]{Proposition}
\newtheorem{definition}[theorem]{Definition}
\newcommand{\Z}{\mathbf{Z}}
\newcommand{\Q}{\mathbf{Q}}
\newcommand{\Qp}{\mathbf{Q}_p}
\newcommand{\Fp}{\mathbf{F}_p}
\renewcommand{\H}{\mathrm{H}}
\DeclareMathOperator{\Ker}{Ker}
\DeclareMathOperator{\NS}{NS}
\DeclareMathOperator{\Aut}{Aut}
\title{Quadratic Chabauty for Atkin-Lehner Quotients of Modular Curves of Prime Level and Genus 4, 5, 6}
\author[1]{Nikola Adžaga}
\affil[1]{Department of Mathematics, Faculty of Civil Engineering, University of Zagreb\thanks{nadzaga@grad.hr }}
\author[2]{Vishal Arul}
\affil[2]{Department of Mathematics, University College London\thanks{varul.math@gmail.com}}
\author[3]{Lea Beneish}
\affil[3]{Department of Mathematics and Statistics, McGill University\thanks{lea.beneish@mail.mcgill.ca}}
\author[4]{Mingjie Chen}
\affil[4]{Department of Mathematics, University of California at San Diego\thanks{mic181@ucsd.edu}}
\author[5]{Shiva Chidambaram}
\affil[5]{Department of Mathematics, University of Chicago\thanks{shivac@uchicago.edu}}
\author[6]{Timo Keller}
\affil[6]{Department of Mathematics, Universit\"{a}t Bayreuth\thanks{Timo.Keller@uni-bayreuth.de}}
\author[7]{Boya Wen}
\affil[7]{Department of Mathematics, Princeton University\thanks{boyaw@math.princeton.edu}}
\date{\today}
\begin{document}

\maketitle
\begin{abstract}
    We use the method of quadratic Chabauty on the quotients $X_0^+(N)$ of modular curves $X_0(N)$ by their Fricke involutions to provably compute all the rational points of these curves for prime levels $N$ of genus four, five, and six. We find that the only such curves with exceptional rational points are of levels $137$ and $311$. In particular there are no exceptional rational points on those curves of genus five and six. More precisely, we determine the rational points on the curves $X_0^+(N)$ for $N=137,173,199,251,311,157,181,227,263,163,197,211,223,269,271,359$.

    \emph{Keywords:} Rational points; Curves of arbitrary genus or genus $\neq 1$ over global fields; arithmetic aspects of modular and Shimura varieties
    
    \emph{2020 Mathematics subject classification:} 14G05 (primary); 11G30; 11G18
\end{abstract}

\section{Introduction}

{The curve $X_{0}^{+}(N)$ is a quotient of the modular curve $X_{0}(N)$ by the Atkin-Lehner involution $w_{N}$ (also called the Fricke involution).  The non-cuspidal points of $X_{0}^{+}(N)$ classify unordered pairs of elliptic curves together with a cyclic isogeny of degree $N$ between them, where the Atkin-Lehner involution $w_{N}$ sends an isogeny to its dual. The set $X_{0}^{+}(N)(\mathbf{Q})$ consists of cusps, points corresponding to CM elliptic curves (CM points), and points corresponding to quadratic $\mathbf{Q}$-curves without complex multiplication. The points of the third kind are referred to as \emph{exceptional points}.

There have been several works related to the study of $\mathbf{Q}$-rational points on Atkin-Lehner quotients of modular curves (see \cite{BDMTV, Castano-Bernard, DL, EL, Galbraith_1996,Galbraith_1999,Galbraith_2002, Mercuri_2018, Momose_1987}) and on Atkin-Lehner quotients of Shimura curves (see \cite{Clark, PY}). Especially relevant for this work are the articles \cite{Galbraith_1996,Galbraith_1999,Galbraith_2002} in which Galbraith constructs models of all such curves of genus $\leq 5$ except for $X_0^+(263)$, and conjectures that he has found all exceptional points on these curves. Building on work of Galbraith, Mercuri \cite{Mercuri_2018} constructs models for such curves of genus $6$ and $7$ of prime level and shows that up to a (very) large naive height, there are no exceptional points on six of these curves (those with $N=163, 197, 229, 269$ and $359$). 

 Towards verifying Galbraith’s conjecture, Balakrishnan, Dogra, M\"uller, Tuitman, and Vonk \cite{BDMTV} show that of the curves $X_{0}^{+}(N)$ with prime $N$ of genus $2$ and $3$, the only curves with exceptional rational points are those with level $N=73,103,191$. The rational points on $X_0^+(N)$ for $N = \{67,73,103\}$ were computed by Balakrishnan, Best, Bianchi, Lawrence, M\"uller, Triantafillou, and Vonk \cite{BBBLMTV} using quadratic Chabauty and the Mordell-Weil sieve. The remaining genus $2$ levels, $\{107,167,191 \}$, are more challenging because there are not enough rational points to run the algorithm. Balakrishnan, Dogra, M\"uller, Tuitman, and Vonk \cite{BDMTV} compute the rational points on the remaining curves of genus $2$ and prime level as well as those of genus $3$ and prime level using models computed by Galbraith \cite{Galbraith_1996} and Elkies.}

{Quadratic Chabauty refers to the technique of depth-two Chabauty-Kim. The Chabauty--Kim method~\cite{Kim_2005,Kim_2009} can be seen as a generalization of the usual Chabauty--Coleman method. The usual Chabauty--Coleman method is a useful tool in computing rational points on curves (see \cite{Chabauty, Coleman, McPoonen, Stoll}), however there are still large classes of curves that do not satisfy the rank condition $r := \operatorname{rk}\operatorname{Jac}(X)(\mathbf{Q}) < g$ of Chabauty--Coleman. Kim developed a nonabelian version of the Chabauty-Coleman method, in which one replaces the use of the Jacobian of a curve with a Selmer variety associated to a certain Galois-stable unipotent quotient of the $\mathbf{Q}_p$-pro-unipotent completion of the étale fundamental group of the curve. Despite the technical nature of Kim's setup, the work of Balakrishnan-Dogra and Balakrishnan, Dogra, M\"uller, Tuitman, and Vonk \cite{BD1, BD2, BDeffective, Balakrishnan_2019}, have shown that one can practically implement a depth-two version of Kim's program.  In particular, work of Siksek \cite{Siksek} uses the criterion of Balakrishnan-Dogra for finiteness of the quadratic Chabauty set to show that for modular curves of genus $g\geq 3$, quadratic Chabauty is more likely to succeed than classical Chabauty--Coleman. Further,  Balakrishnan,  Best,  Bianchi,  Dogra,  Lawrence, M\"uller, Triantafillou, Tuitman, and Vonk developed computational tools to carry out quadratic Chabauty explicitly (see \cite{balakrishnan2020computational,BDMTV,BBBDLMTTV}).
}

Continuing the work begun in \cite{BDMTV}, we aim to compute all $\mathbf{Q}$-rational points on the curves $X_0^+(N)$ of prime level $N$ when the genus is $4, 5, 6$ using the quadratic Chabauty method \cite{BD1, BD2} and the computational tools developed in \cite{BBBDLMTTV}. From Section \ref{sec:genus}, we have that for prime $N$, the curve $X_{0}^{+}(N)$ has genus 4 if and only if \begin{equation}\label{genus4}
    N \in \{ 137, 173, 199, 251, 311 \}.
\end{equation} 
It has genus 5 if and only if \begin{equation}\label{genus5}
    N \in \{ 157, 181, 227, 263 \},
\end{equation}
and it has genus 6 if and only if \begin{equation}\label{genus6}
    N \in \{ 163, 197, 211, 223, 269, 271, 359 \}.
\end{equation}
These modular curves are not hyperelliptic according to \cite{HasegawaHashimoto}. They satisfy the conditions required for quadratic Chabauty, because their Mordell-Weil rank is equal to their genus and their Picard number is greater than $1$. In addition, they usually have enough rational points (according to~\cite[Remark~1.6]{Balakrishnan_2019}, $g+1$ points suffice when using equivariant $p$-adic heights, and one can lower this requirement by constructing $g$ independent rational points on the Jacobian of the curve).

{To use the quadratic Chabauty method, we use models for $X_{0}^{+}(N)$ computed by Galbraith 
\cite{Galbraith_1996,Galbraith_1999,Galbraith_2002} and \cite{Mercuri_2018}. 
Equations for $N \in \{ 137, 157 \footnote{The canonical model for $X_{0}^{+}(157)$ given in Section \ref{sec:Models}
corrects a small typo in the third equation in Galbraith's model for $N = 157$ (the leading term should be $2 w^{2}$ instead of $w^{2}$).}, 173, 181, 199, 227, 251  \}$ are taken from \cite[pp.~32--34]{Galbraith_1996}, the equation for $N = 311$ is taken from
\cite[pp.~316]{Galbraith_1999}, and equations for  $N \in \{ 163, 197, 211, 223, 269, 271, 359\}$ are taken from \cite[pp.~299--306]{Mercuri_2018}. The rational points are taken from \cite[pp.~88]{Galbraith_1996}, \cite[pp.~316]{Galbraith_1999}, and \cite[pp.~299--306]{Mercuri_2018}. For $N \in \{ 199, 251 \}$, we computed the known rational points by brute force. These equations and the list of known rational points in each of these levels are given in Section~\ref{sec:Models}. With these models, we find suitable plane affine patches (in the sense of the last paragraph of Section~\ref{sec:QC}) for the code \texttt{QCModAffine} developed in~\cite{BBBDLMTTV} and verify that the list of rational points is complete.  

Our main result is the following:

\begin{theorem} The only genus $4$ curves $X_0^+(N)$ {where $N$ is prime} that have exceptional rational points are $X_0^+(137)$ and $X_0^+(311)$. There are no exceptional rational points on curves $X_0^+(N)$ of genus $5$ and $6$ {for any prime $N$.} 
\end{theorem}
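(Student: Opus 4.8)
The plan is to treat each prime level $N$ appearing in \eqref{genus4}, \eqref{genus5}, and \eqref{genus6} individually, applying the quadratic Chabauty method to the explicit model of $X_0^+(N)$ and then classifying the resulting rational points. First, for each $N$ I would confirm that the hypotheses enabling quadratic Chabauty hold: that $\operatorname{rk}\operatorname{Jac}(X_0^+(N))(\mathbf{Q})$ equals the genus $g$, and that the Picard number $\rho$ exceeds $1$. The latter guarantees a nontrivial class in $\NS$ beyond the canonical polarization, which furnishes the correspondence used to build the quadratic Chabauty function and, via the Balakrishnan--Dogra finiteness criterion, ensures the resulting locus is finite. Having fixed a prime $p$ of good reduction, I would cover $X_0^+(N)$ by suitable affine plane patches (in the sense of Section~\ref{sec:QC}) on which Coleman integration and the code \texttt{QCModAffine} of \cite{BBBDLMTTV} can be run.

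Second, I would compute the quadratic Chabauty locus. For a suitable $p$, the method produces, in each residue disk, a finite set of $p$-adic points containing all rational points reducing into that disk; their union is a finite superset of $X_0^+(N)(\mathbf{Q})$. The global $p$-adic height decomposes as a sum of local heights, and one uses the known rational points --- at least $g+1$ of them, per \cite[Remark~1.6]{Balakrishnan_2019} --- to solve for the unknown height constants. Where fewer than $g+1$ rational points are available, I would instead construct $g$ independent classes in the Jacobian directly, as indicated in the excerpt.

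Third comes the step of matching and sieving, which I expect to be the main obstacle. Each point in the computed locus must be decided as rational or spurious. I would first recognize the genuinely rational points by comparison against the lists from \cite{Galbraith_1996,Galbraith_1999,Mercuri_2018} (together with the points found by brute force for $N\in\{199,251\}$). The remaining $p$-adic solutions must then be proved non-rational, and this is where the difficulty concentrates: the Mordell--Weil sieve reduces modulo several auxiliary primes and combines the images under an Abel--Jacobi map to exhibit an obstruction to rationality, and in some levels it may be necessary to run quadratic Chabauty at a second prime $p'$ first, intersecting the two loci to cut the spurious set down to a size the sieve can handle. Both the choice of patches and the sieve are computation-heavy and where the verification is most likely to be delicate.

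Finally, with the complete set $X_0^+(N)(\mathbf{Q})$ in hand for every level, I would sort each rational point into one of three types. The cusp is the image of the two cusps of $X_0(N)$, which $w_N$ interchanges; CM points are detected by the associated $j$-invariants lying in the finite list of CM $j$-invariants of small class number (equivalently, by the corresponding elliptic curve admitting complex multiplication). Whatever survives is by definition exceptional. Tallying these over all sixteen levels then yields the stated conclusion: exceptional points occur only for $N=137$ and $N=311$ in genus $4$, and for no prime level of genus $5$ or $6$.
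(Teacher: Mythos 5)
Your high-level plan --- run quadratic Chabauty level by level on the sixteen prime levels, using $r=g$ and $\rho>1$, and then classify the resulting points as cusps, CM points, or exceptional --- is the same skeleton as the paper's argument. But the emphasis is placed differently in one substantive way: you treat the Mordell--Weil sieve as the main engine for killing spurious $p$-adic points, whereas the paper never invokes the sieve at all. Instead, the paper's effort is concentrated on constructing, for each $N$, a plane model $Q(x,y)=0$ monic in $y$ (via $\tau_x\times\tau_y\colon X_0^+(N)\to\mathbf{P}^1\times\mathbf{P}^1$, the Segre embedding, and a projection) and on choosing a prime $p$ for which $T_p$ generates $\End(J)\otimes_{\Z}\Q$ and for which \texttt{QCModAffine} already reports that the only rational points in the good residue disks are the images of the known ones; no second prime or sieving step is needed in any of the sixteen cases. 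The rank condition is also verified differently from what you suggest: not by a direct Mordell--Weil computation, but by checking $L'(f,1)\neq 0$ for newform orbit representatives and appealing to Kolyvagin--Logachev via \cite[Proposition~7.1]{DL}. Your sieve-based route is a legitimate fallback and is what related works (e.g.\ \cite{BBBLMTV}) do, but it is not what happens here.

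There is, however, a genuine gap in your proposal as written: you do not account for the points that the plane-model computation cannot see. \texttt{QCModAffine} only determines rational points lying in \emph{good} residue disks of the affine patch, and the map $\varphi'$ to the plane model is only birational and is undefined on an explicit closed subvariety. A complete argument must therefore (i) verify that every $\mathbf{F}_p$-point of $X_0^+(N)$ maps to a good point on at least one of the chosen patches (this is exactly why two patches are needed for $N=197,211,223,359$), (ii) separately compute the rational points on the locus $X_0^+(N)_{\varphi'\text{undef}}$ where the map is undefined, and (iii) pull back the output along $\varphi_{N,i}^{-1}$ and check that the preimages of the images of known points contribute no new rational points. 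Saying you would ``cover $X_0^+(N)$ by suitable affine plane patches'' gestures at (i) but does not address bad disks, and (ii) and (iii) are absent entirely; without them the computation does not provably exhaust $X_0^+(N)(\Q)$.
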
}
Therefore, this work confirms Galbraith's conjecture for prime levels.
In particular, this, combined with the results of \cite{BDMTV} on genus $2$ and $3$ curves, combined with works of \cite{AraiMomose_2010,BBBLMTV,Balakrishnan_2019, BarsGonzalezXarles, Momose_1986, Momose_1987}, and work in progress of the second author and M\"uller on curves $X_{0}^{+}(N)$ with composite level will resolve Galbraith's conjecture. For genus $6$, this work shows that the subset of rational points that Mercuri found is in fact all of them, and for those curves with $N=163,197, 359$ we confirm \cite[Result 1.2]{Mercuri_2018} without any restrictions on height. 

{This paper is organized as follows. In Section \ref{sec:QC}, we give an overview of the quadratic Chabauty method. Section \ref{sec:strategy} contains a description of the method we used, namely how we found the models that we used to run the quadratic Chabauty algorithm starting from the canonical model of $X_{0}^{+}(N)$ and how we chose suitable primes. In Section \ref{sec:genus}, we bound the genus of $X_{0}^{+}(N)$ from below and give a list of all $N$ such that the genus of $X_{0}^{+}(N)$ is less than or equal to $6$. In Section \ref{sec:Models}, there are tables for genus $4,5,6$ containing the canonical models of $X_{0}^{+}(N)$, the known rational points, and our models and primes for which we run quadratic Chabauty.}

 \subsection*{Acknowledgments}
This project was initiated as part of a $2020$ Arizona Winter School project led by Jennifer Balakrishnan and Netan Dogra. The authors are grateful to Jennifer Balakrishnan and Netan Dogra for the project idea and for their valuable guidance throughout the process. The authors are also grateful to Steffen M\"uller, Padmavathi  Srinivasan,
and Floris Vermeulen for their support during the project and to Jennifer Balakrishnan, Netan Dogra, and Steffen M\"uller for their comments on an earlier draft.
N.\@ A.\@ was supported by the Croatian Science Foundation under the project no. IP2018-01-1313.

\section{Overview of Quadratic Chabauty for Modular Curves: theory and algorithm}\label{sec:QC}

Let $X/\Q$ be a smooth projective geometrically connected curve of genus $g \geqslant 2$ with Jacobian $J$ whose Mordell-Weil group $J(\Q)$ has rank $r = g$. For a prime $p$ of good reduction, the abelian logarithm induces a homomorphism
\[ \log : J(\Q_p) \rightarrow \H^0(X_{\Qp}, \Omega^1)^{\vee}.\]
We assume that the $p$-adic closure of the image of $J(\Q) \subset J(\Qp)$ in $\H^0(X_{\Qp}, \Omega^1)^{\vee}$ has rank $g$ (otherwise classical Chabauty-Coleman applies). We also assume that $X(\Q)$ is non-empty, so we can choose a rational base point $b$ to map $X$ into its Jacobian $J$.
If the Néron-Severi rank $\rho$ of $J$ is larger than $1$, then there exists a non-trivial $Z \in \Ker(\NS(J)  \rightarrow \NS(X))$ inducing a correspondence on $X \times_\Q X$. Balakrishnan and Dogra in \cite{BD1,Balakrishnan_2019}
explain how to attach to any such $Z$ a locally analytic \emph{quadratic Chabauty function} \[\rho_Z : X(\Qp)\rightarrow \Qp \] as follows: using Nekovář's theory of $p$-adic heights \cite{Nekovar},
one can construct a global $p$-adic height which decomposes as a sum of local height functions. The quadratic Chabauty function $\rho_Z$ is defined as the difference between the global $p$-adic height and the local height for the chosen prime $p$. Even though we do not go into details here, we note that the computation of the global height pairing is easier when $X$ has many rational points (at least $3$, which is indeed the case for all modular curves studied in this paper).

The crucial property of the quadratic Chabauty function is that there exists a finite set $\Upsilon\subset \Qp$, such that $\rho_Z(x)\in\Upsilon$ for any $x\in X(\Q)$. Because $\rho_Z$ has Zariski-dense image on every residue disk and is given by a convergent power series, this implies that the set $X(\Q_p)_2$ of $\Q_p$-points of $X$ having values in $\Upsilon$ under $\rho_Z$ is finite. Because $X(\Q)$ is contained in that set, it is finite as well. Since both $\rho_Z$ and $\Upsilon$ can be explicitly computed by \cite{BBBDLMTTV}, this makes the provable determination of $X(\Q_p)_2$ possible\footnote{at least under the assumption that $\rho_Z$ has no repeated roots; otherwise, we get a finite superset.} if the quadratic Chabauty condition $r < g + \rho - 1$ is satisfied (e.g., if $r = g$ and $\rho > 1$).

In \cite{BDMTV}, the authors applied quadratic Chabauty to some of the modular curves associated to congruence subgroups of $\mathrm{SL}_2(\mathbf{Z})$, as well as Atkin-Lehner quotients of such curves. The algorithm in \cite{BDMTV} is specific to modular curves only when determining the nontrivial class $Z$. In their algorithm, $Z$ is computed by the Hecke operator $T_p$ (which is determined by the Eichler-Shimura relation). We recall the inputs and outputs of the algorithm in \cite{BDMTV} here.

\begin{itemize}
\item Input:
    \begin{itemize}
        \item {A modular curve $X/\Q$ that satisfies $r=g \geq 2$ and such that the $p$-adic closure of the image of $J(\Q)$ in $\H^0(X_{\Qp}, \Omega^1)^{\vee}$ under $\log$ has rank $g$.}
        \item{ A prime $p$ which is a prime of good reduction for $X/\Q$ such that the Hecke operator $T_p$ generates $\operatorname{End}(J) \otimes_\Z \Q$.} We check this condition in our cases by showing that $a_p(f)$ generates the number field generated by the coefficients of $f$ where $f$ is any newform orbit representative associated to $X/\Q$. 
    \end{itemize}
\item Output: A finite set containing $X(\Q_p)_2$. 
\end{itemize}

The curves of our interest, $X_0^+(N)$ of genus 4, 5 and 6 where $N$ is a prime, satisfy the condition $r=g$. This is done by checking that for all $N$ in~\eqref{genus4}, \eqref{genus5} and~\eqref{genus6}, $f\in S_2(\Gamma_0(N))^{+,\mathrm{new}}$ satisfies $L'(f,1) \neq 0$ (actually, we just need to check this for one arbitrary representative in each Galois orbit). According to~\cite[Proposition 7.1]{DL} (which the authors attribute to Kolyvagin and Logachev~\cite{KolyvaginLogachev}), this implies that the (algebraic and analytic) rank of $J_0^+(N)/\Q$ equals $\sum \dim(A_{f_i}) = g$ where the summation is taken over an arbitrary set of newform orbits representatives. Let us note here that this equality $r=g$ doesn't hold for all $X_0^+(N)$, the smallest genus where $r>g$ happens (for prime $N$) is $g=206$ (and $N=5077$) \cite{BDMTV,DL}.

The implementation (available at \cite{BBBDLMTTV}) of the algorithm in \cite{BDMTV} is designed to take as input a plane affine patch $Y: Q(x,y)=0$ of a modular curve $X/\Q$. The model $Q(x,y)=0$ does not need to be smooth, but it should be monic in $y$ with $p$-integral coefficients. We can sometimes find an affine patch $Y$ such that all rational points on $X$ must be among the points returned by running their program on $Y$. We have managed to do that for all genus $4$ and $5$ curves $X_0^+(N)$ where $N$ is prime.
If no such $Y$ is found, then we find two affine patches such that every rational point on $X$ is contained in at least one patch. Moreover, we need every $\Fp$-point on $X$ (realized as a canonical model) to map to a good point (Definition \ref{def1}) on at least one patch. Finding favorable affine patches turned out to be the most challenging part when applying their algorithm for large genus Atkin-Lehner quotients. We talk about our strategy for generating these affine patches in 
Section~\ref{sec:strategy}.

\section{Overview of strategy}\label{sec:strategy}

As introduced in Section~\ref{sec:QC}, our first task is to find a suitable plane model $Q(x,y)$ {of $X_0^+(N)$} that is monic in $y$ and has ``small'' coefficients. We start with the image of the canonical embedding of $X_0^+(N)$ in $\mathbf{P}^{g-1}$ (as stated in the introduction, these are non-hyperelliptic for our $N$). In what follows we shall identify $X_0^+(N)$ with its canonical image and use the same notation for both. We find two rational maps $\tau_{x}, \tau_{y} \colon X_{0}^{+}(N) \to \mathbf{P}^{1}$ such that the product $\tau_{x} \times \tau_{y} \colon X_{0}^{+}(N) \to \mathbf{P}^{1} \times \mathbf{P}^{1}$ is a birational map onto its image. Compose with the Segre embedding $\mathbf{P}^{1} \times \mathbf{P}^{1} \to \mathbf{P}^{3}_{[w: x: y: z]}$, and then project from the point $[1 \colon 0 \colon 0 \colon 0]$ onto the plane $w = 0$; 
denote by $\varphi'$ the composite 
\[
\varphi' \colon X_{0}^{+}(N) \xrightarrow{\tau_{x} \times \tau_{y}} \mathbf{P}^{1} \times \mathbf{P}^{1} \xrightarrow{\text{Segre}} \mathbf{P}^{3}_{[w: x: y: z]} \xrightarrow{\text{projection}} \mathbf{P}^{2}_{[x: y: z]}.
\]
Choose $x_{1}, x_{2}, y_{1}, y_{2} \in \mathcal{O}_{\mathbf{P}^{g - 1}}(1)$ such that 
$\tau_{x}(q) = [x_{1}(q): x_{2}(q)]$ and $\tau_{y}(q) = [y_{1}(q): y_{2}(q)]$ in coordinates. 
Then the equation for $\varphi'$ is 
\[
\varphi' \colon q \mapsto [(x_{1} y_{2})(q) \colon (x_{2} y_{1})(q) \colon (x_{2} y_{2})(q)].
\]
(When $x_{2}(q), y_{2}(q) \neq 0$, this is just  $\varphi' \colon q \mapsto [(x_{1} / x_{2})(q): (y_{1} /
y_{2})(q): 1]$.) 

{Note that on some closed subvarieties of $X_0^+(N)$, the map $\varphi '$ is not defined. Indeed, on the subvariety with additional equations $x_1=x_2=0$, the map $\tau_x$ is not defined; with $y_1=y_2=0$, the map $\tau_y$ is not defined; with $x_2=y_2=0$, (the map $\tau_x$ or $\tau_y$ or) the projection following the Segre embedding is not defined. We denote the union of the three subvarieties by $X_0^+(N)_{\varphi' \text{undef}}$, outside which $\varphi'$ is well-defined. The set of rational points $X_0^+(N)_{\varphi' \text{undef}}(\Q)$ can be computed directly using Magma~\cite{Magma}, and in our final step, we check that it does not contain any additional points other than the known rational points. For now, we focus on the generic part of $X_0^+(N)$ where $\varphi'$ is defined.} 

The image $\mathcal{C}'_{N} \colonequals \varphi'(X_{0}^{+}(N))$ will be a curve 
given by an equation of the form
\[
    Q_{0}(x, z) y^{d} + Q_{1}(x, z) y^{d - 1} + \cdots + Q_{d}(x, z) = 0 ,
\]
where $Q_{0}(x, 1)$ is monic. (If not, simply divide the equation throughout by the leading coefficient of $Q_{0}(x, 1)$ to get to this form.) Multiply the above equation throughout by $Q_0^{d-1}$, we get
\[
    (Q_{0}(x, z) y)^{d} + Q_{1}(x, z)(Q_{0}(x, z) y )^{d - 1} + \cdots + Q_{0}(x, z)^{d-1} Q_{d}(x, z) = 0 ,
\]
Let $\deg Q_0$ denote the total degree of $Q_0$ and let $\mathcal{C}_{N}$ be the image of $\mathcal{C}'_{N}$ under the map 
\[
\psi \colon [x \colon y \colon z] \mapsto \left[ x z^{\deg Q_{0}} \colon Q_{0}(x, z) y \colon z^{\deg Q_{0} + 1}  \right].
\]
Then the affine patch of $\mathcal{C}_{N}$ given by $z = 1$ will have an equation $Q(x,y)=0$ where $Q(x,y)$ is a polynomial over $\Q$ monic in $y$, suitable for the quadratic Chabauty algorithm. 
Let $\varphi \colon X_{0}^{+}(N) \to \mathcal{C}_{N}$ denote the composite $\varphi = \psi \circ \varphi'$. 

Next, we select a suitable prime $p$ and run \texttt{QCModAffine} on the affine patch of $\mathcal{C}_{N}$ 
given by $z = 1$. However, \texttt{QCModAffine} only computes rational points (and compares with known rational points) \emph{outside} the \emph{bad} residue disks.
\begin{definition}[\cite{BalTui}, Definition~2.8 and~2.10]\label{def1}
Let $X^\mathrm{an}$ denote the rigid analytic space over $\mathbf{Q}_p$ associated to a projective curve $X/\mathbf{Q}_p$ given by (inhomogeneous) equation $Q(x,y)=0$. Let $\Delta(x)$ be the discriminant of $Q$ as a function of $y$, and let $r(x)=\Delta/\gcd(\Delta,d\Delta/dx)$. We say that a residue disk (as well as any point inside it) is \emph{infinite} if it contains a point whose $x$-coordinate is $\infty$, is \emph{bad} if it contains a point whose $x$-coordinate is {$\infty$ or} a zero of $r(x)$, and is \emph{good} if it is not \emph{bad}.
\end{definition}

Therefore, we may need to repeat this construction multiple times (with the same $p$), i.e., we 
need a collection $\{ (\varphi_{N,i}, \mathcal{C}_{N, i}) \}_{i =  1}^{k}$ such that for each $P \in 
X_{0}^{+}(N)(\mathbf{F}_{p})$, there exists some $i$ such that $\varphi_{N,i}(P)$ does not map to a bad $\mathbf{F}_{p}$-point. This would imply that for each residue disk $D$ of 
$X_{0}^{+}(N)(\mathbf{Q}_{p})$, there exists some $i$ such that 
$\varphi_{N,i}(D) \subseteq \mathcal{C}_{N, i}(\mathbf{Q}_{p})$ is contained in a good residue disk. If \texttt{QCModAffine} reports that the only rational points in the good
disks of $\mathcal{C}_{N, i}(\mathbf{Q}_{p})$ are the images of the known rational points, then we know that 
$X_{0}^{+}(N)(\mathbf{Q})$ is contained in the finite set 
\[
\bigcup_{i = 1}^{k} \varphi_{N, i}^{-1} \{ \varphi_{N, i}( X_{0}^{+}(N)(\mathbf{Q})_{\text{known}} ) \} \cup X_0^+(N)_{\varphi' \text{undef}}(\Q),
\]
and then we check that this equals $X_{0}^{+}(N)(\mathbf{Q})_{\text{known}}$ {using direct Magma computations}. 

In practice, we seek to choose $x_{1}, x_{2}, y_{1}, y_{2} \in 
\mathcal{O}_{\mathbf{P}^{g - 1}}(1)$ so that $d_{x}:=\deg\tau_x, d_{y}:=\deg \tau_y$ are small for faster computations, since the degree of the defining equation $Q(x,y)$ of $\mathcal{C}_{N}$ is at most $d_{x} d_{y}$ and the maximum power of 
$y$ that appears is $d_{x}$. We would also like to keep $k$ small, and ideally to have $k=1$, in which case we need to find a prime $p$ for which $Q(x,y)=0$ has no bad disks. In order to do so, we would like to start with $Q(x,y)$ for which $r(x)$ has no linear factor over $\mathbf{Q}$. If the construction does not satisfy this property, we adjust the construction of $\varphi'$ by applying some $\sigma\in \Aut(\mathbf{P}^1\times_\Q \mathbf{P}^1)$ before the Segre embedding, and/or by post-composing $\varphi '$ by some $\rho \in \Aut(\mathbf{P}^2)$ (the latter only invoked in the $N=211$ case). With suitable adjustments in the construction of $\varphi'$, we were able to find plane models $Q(x,y)$ for which $r(x)$ has no linear factor over $\mathbf{Q}$ for all the cases we aim to solve in this paper. Even so, for four of the genus~$6$ cases, $N=197, 211, 223, 359$, we did not find a model for which there is a prime $p$ without any bad disks, and in each of these four cases, we were able to find two patches and a prime $p$ such that for each $P \in X_{0}^{+}(N)(\mathbf{F}_{p})$,
$\varphi_{N,i}(P)$ does not map to a bad $\mathbf{F}_{p}$-point on at least one of the patches. For all the other prime levels where $X_0^+(N)$ has genus $4,5,6$, we were able to find a single patch and a prime for which there is no bad disk. That is, we were able to solve the problem with $k\le 2$.

In this section, we do not use that the curve in question has a modular interpretation. The strategy could possibly be applied to other curves of arbitrary genus $g$. Currently Magma supports computing plane models and gonal maps for curves up to genus $6$.

\section{Modular curves \texorpdfstring{$X_{0}^{+}(N)$}{X\_\{0\}\^{}\{+\}(N)} of genus at most 6}\label{sec:genus}
We will obtain an explicit lower bound on the genus 
of $X_{0}^{+}(N)$ in order to find all $X_{0}^{+}(N)$ of genus at most $6$.

Let $g_{0}(N)$ be the genus of $X_{0}(N)$ and let $g_{0}^{+}(N)$ be the genus of $X_{0}^{+}(N)$.

\begin{theorem}
\label{Theorem:LowerBoundg0N}
For integers $N \geq 1$, we have $g_{0}(N) \ge (N - 5 \sqrt{N} - 8) / 12$.
\end{theorem}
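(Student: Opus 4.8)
The plan is to use the standard genus formula for $X_0(N)$ and bound each contributing term. Recall that for $N \geq 1$,
\[
g_0(N) = 1 + \frac{\mu(N)}{12} - \frac{\nu_2(N)}{4} - \frac{\nu_3(N)}{3} - \frac{\nu_\infty(N)}{2},
\]
where $\mu(N) = [\SL_2(\Z) : \Gamma_0(N)] = N \prod_{p \mid N}(1 + 1/p)$ is the index, $\nu_2(N)$ and $\nu_3(N)$ count the elliptic points of order $2$ and $3$, and $\nu_\infty(N)$ counts the cusps. Since we want a lower bound on $g_0(N)$, I would bound $\mu(N)$ from below and each of the three subtracted quantities from above, so that the resulting inequality is clean.

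First I would note that $\mu(N) \geq N$ always, since each factor $(1 + 1/p) \geq 1$; this disposes of the leading term cheaply and avoids any multiplicative-function estimates. Next I would bound the elliptic-point counts: the standard formulas give $\nu_2(N) = \prod_{p \mid N}\left(1 + \left(\frac{-1}{p}\right)\right)$ when $4 \nmid N$ (and $0$ otherwise) and similarly $\nu_3(N) = \prod_{p \mid N}\left(1 + \left(\frac{-3}{p}\right)\right)$ when $9 \nmid N$. Each factor is at most $2$, and the number of prime divisors is at most $\log_2 N$, so one gets a bound like $\nu_2(N), \nu_3(N) \leq 2^{\omega(N)} \leq d(N)$; however, to land exactly on the $\sqrt{N}$ shape in the statement I expect the cleaner route is to use the crude uniform bounds $\nu_2(N) \leq 2^{\omega(N)}$ and $\nu_3(N) \leq 2^{\omega(N)}$ together with $2^{\omega(N)} \leq $ (something controlled by $\sqrt{N}$). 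The cusp count $\nu_\infty(N) = \sum_{d \mid N} \varphi(\gcd(d, N/d))$ is the term I would handle most carefully, since $\nu_\infty$ enters with the largest coefficient $1/2$; the key elementary fact is $\nu_\infty(N) \leq \sigma_0(N) \cdot \max_{d \mid N}\varphi(\gcd(d,N/d))$, and more usefully $\sum_{d \mid N}\varphi(\gcd(d,N/d)) \leq \sum_{d \mid N}\sqrt{N} = \sqrt{N}\,d(N)$ is too lossy, so I would instead use the sharper observation that $\varphi(\gcd(d,N/d)) \leq \gcd(d, N/d) \leq \sqrt{N}$, combined with a bound on how many divisors can attain sizes near $\sqrt N$.

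The main obstacle, and where the $5\sqrt N$ coefficient really comes from, is controlling $\nu_\infty(N)$ and the elliptic terms simultaneously by a single multiple of $\sqrt N$. I expect the cleanest packaging is to prove the three uniform inequalities $\nu_2(N) \leq \sqrt{N}$, $\nu_3(N) \leq \sqrt{N}$, and $\nu_\infty(N) \leq 2\sqrt{N}$ (each valid for all $N \geq 1$, possibly after checking small $N$ by hand), and then assemble
\[
g_0(N) = 1 + \frac{\mu(N)}{12} - \frac{\nu_2(N)}{4} - \frac{\nu_3(N)}{3} - \frac{\nu_\infty(N)}{2} \geq 1 + \frac{N}{12} - \frac{\sqrt N}{4} - \frac{\sqrt N}{3} - \sqrt N.
\]
The sum of the $\sqrt N$ coefficients on the right is $\tfrac14 + \tfrac13 + 1 = \tfrac{19}{12}$, which is comfortably less than $\tfrac{5}{12}\cdot\text{(the factor needed)}$; dividing through and comparing with the target $(N - 5\sqrt N - 8)/12 = \tfrac{N}{12} - \tfrac{5\sqrt N}{12} - \tfrac{2}{3}$, I would verify that $1 - \tfrac{19\sqrt N}{12} \geq -\tfrac{5\sqrt N}{12} - \tfrac23$, i.e. that the constant and $\sqrt N$ slack absorb the difference.

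The one delicate point will be the $\sqrt N$ bounds for the elliptic and cusp counts when $N$ has few prime factors but those small primes force $\nu_2$ or $\nu_\infty$ to be large relative to $\sqrt N$; I would handle this by isolating finitely many small $N$ (where $\sqrt N$ is small) and checking them directly, then invoking the multiplicative bounds for $N$ beyond an explicit threshold. So the real content is the elementary-but-careful estimate on $\nu_\infty(N)$, and everything else is bookkeeping to collect the coefficients into the stated constant $5$.
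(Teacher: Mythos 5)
The paper does not actually prove this statement; it simply cites Section~3 of Csirik--Wetherell--Zieve, so there is no in-paper argument to compare against. Judged on its own terms, your plan has a fatal gap at the final assembly step. From your displayed bound $g_0(N) \ge 1 + \tfrac{N}{12} - \tfrac{19\sqrt N}{12}$ and the target $\tfrac{N}{12} - \tfrac{5\sqrt N}{12} - \tfrac{2}{3}$, the inequality you say you ``would verify,'' namely $1 - \tfrac{19\sqrt N}{12} \ge -\tfrac{5\sqrt N}{12} - \tfrac{2}{3}$, is equivalent to $\sqrt N \le 10/7$, which fails for every $N \ge 3$. The coefficient $19/12$ is \emph{larger} than $5/12$, not ``comfortably less,'' and a bounded amount of constant-term slack cannot absorb a deficit growing like $\tfrac{14}{12}\sqrt N$. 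So even if all three uniform bounds you propose were true, the argument would not close.

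Moreover, the key ingredient $\nu_\infty(N) \le 2\sqrt N$ is false. The function $\nu_\infty(N)/\sqrt N$ is multiplicative with value $1 + 1/p$ at $N = p^2$ (since $\nu_\infty(p^2) = p+1$), and $\prod_{p}(1+1/p)$ diverges, so $\nu_\infty(N)/\sqrt N$ is unbounded; concretely $\nu_\infty(900) = 3\cdot 4\cdot 6 = 72 > 60 = 2\sqrt{900}$. This shows that no estimate of the form $\nu_\infty(N) \le C\sqrt N$ can be combined with the crude lower bound $\mu(N) \ge N$: for such $N$ the surplus $\mu(N) - N = N\bigl(\prod_{p\mid N}(1+1/p) - 1\bigr)$ is exactly what must be retained to dominate the $-\tfrac{\nu_\infty}{2}$ term. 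Any correct proof along these lines has to bound $\tfrac{\mu(N)}{12} - \tfrac{\nu_\infty(N)}{2}$ jointly (e.g.\ prime-by-prime, using multiplicativity, or via the identity $\mu(N) = \sum_{d\mid N}\varphi(\gcd(d,N/d))\cdot \tfrac{N}{d\gcd(d,N/d)}$ expressing $\mu$ as a sum of cusp widths), which is the substance of the cited reference and is absent from your outline.
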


\begin{proof}
See Section 3 of \cite{Csirik_Wetherell_Zieve_2000}.
\end{proof}

Let $\nu(N)$ denote the number of fixed points of $w_{N}$. Let $h(D)$ denote the class number of the quadratic order with discriminant $D$.
\begin{theorem}
\label{Theorem:nuNComputation}
For $N \ge 5$,
\[
\nu(N) = \begin{cases}
h(-4 N) + h(-N) & \text{if } N \equiv 3 \pmod{4} \\
h(-4 N) &\text{otherwise.}
\end{cases}
\]
\end{theorem}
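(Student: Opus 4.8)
The plan is to count the geometric fixed points of the Fricke involution $w_N$ on $X_0(N)$ by interpreting them moduli-theoretically and then translating the count into class numbers of imaginary quadratic orders. Recall that $w_N$ sends a non-cuspidal point $(E,C)$, with $C \subset E$ cyclic of order $N$, to $(E/C,\,E[N]/C)$, and that it permutes the cusps without fixing any (a short direct check), so for $N \ge 5$ I may work entirely with non-cuspidal points over $\mathbf{C}$ (equivalently $\overline{\mathbf{Q}}$). A point $(E,C)$ is fixed precisely when there is an isomorphism $u \colon E/C \xrightarrow{\sim} E$ carrying $E[N]/C$ to $C$. Composing with the quotient isogeny $\pi$ gives an endomorphism $\alpha = u \circ \pi \in \End(E)$ of degree $N$ with $\ker\alpha = C$, hence $\alpha\bar\alpha = N$; and the compatibility $u(E[N]/C) = C$ unwinds, using $\alpha(E[N]) = \ker\bar\alpha$, to the single condition $\ker\bar\alpha = \ker\alpha$.

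First I would reduce to trace zero. Writing $\alpha + \bar\alpha = \operatorname{tr}(\alpha) =: t \in \mathbf{Z}$ and restricting to $C = \ker\alpha$, the equality $\ker\bar\alpha = \ker\alpha$ forces multiplication by $t$ to annihilate $C \cong \mathbf{Z}/N$, i.e. $N \mid t$. Since $\alpha \notin \mathbf{Z}$ (its kernel $C$ is cyclic of order $N$, not of the form $E[m]$) it is imaginary, so $t^2 < 4N$, whence $|t| < 2\sqrt{N} < N$ for $N \ge 5$ and therefore $t = 0$ and $\alpha^2 = -N$. Thus every fixed point yields an elliptic curve $E$ whose endomorphism ring $\mathcal{O} = \End(E)$ is an order in $K = \mathbf{Q}(\sqrt{-N})$ containing $\sqrt{-N}$, together with the distinguished subgroup $C = \ker(\sqrt{-N})$; conversely such data (with $C$ cyclic) produce a fixed point, and since $\pm\sqrt{-N}$ are the only trace-zero, norm-$N$ elements of $\mathcal{O}$ and share the same kernel, each such $E$ gives exactly one fixed point.

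Next I would classify the contributing orders and count. Because $\mathbf{Z}[\sqrt{-N}] \subseteq \mathcal{O} \subseteq \mathcal{O}_K$, and because $C = \ker(\sqrt{-N}) \cong \mathcal{O}/(\sqrt{-N})$ as abelian groups, the requirement that $C$ be cyclic of order $N$ cuts down the admissible orders. A Smith-normal-form computation shows $\mathcal{O}/(\sqrt{-N}) \cong \mathbf{Z}/N$ exactly for $\mathcal{O} = \mathbf{Z}[\sqrt{-N}]$ (discriminant $-4N$), which always occurs, and for $\mathcal{O} = \mathbf{Z}[\tfrac{1+\sqrt{-N}}{2}]$ (discriminant $-N$), which is an order containing $\sqrt{-N}$ exactly when $N \equiv 3 \pmod 4$; all other orders between $\mathbf{Z}[\sqrt{-N}]$ and $\mathcal{O}_K$ give a non-cyclic kernel and are discarded. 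Finally, by the theory of complex multiplication the number of $E/\mathbf{C}$ with $\End(E) \cong \mathcal{O}$ is the class number $h(\mathcal{O})$, so summing over the admissible orders gives $\nu(N) = h(-4N)$ when $N \not\equiv 3 \pmod 4$ and $\nu(N) = h(-4N) + h(-N)$ when $N \equiv 3 \pmod 4$.

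The main obstacle is the order-classification step: establishing that cyclicity of $\ker(\sqrt{-N})$ selects precisely the discriminants $-4N$ and (conditionally) $-N$, with no spurious contributions, is delicate when $N$ is not squarefree, since then $\mathbf{Z}[\sqrt{-N}]$ can have large conductor and many intermediate orders. One must check uniformly that every proper overorder other than the index-$2$ one (in the $N \equiv 3 \pmod 4$ case) destroys cyclicity, and simultaneously confirm that the CM count is genuinely $h(\mathcal{O})$ with no double counting across the two orders and no correction from the finitely many curves with extra automorphisms. Equivalently, one can run the whole argument on the upper half-plane: fixed points correspond to $\Gamma_0(N)$-classes of elliptic matrices $M$ in the nontrivial coset $W_N\Gamma_0(N)$, the congruence $N \mid \operatorname{tr} M$ together with $|\operatorname{tr} M| < 2\sqrt{N}$ again forces $\operatorname{tr} M = 0$ and $M^2 = -N$, and one counts the resulting binary quadratic forms of discriminant $-4N$ with $N \mid A$ up to $\Gamma_0(N)$-equivalence; the bookkeeping that turns this into $h(-4N)$ and $h(-N)$ is the same computation in disguise.
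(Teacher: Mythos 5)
Your proposal is correct, but it takes a genuinely different route from the paper: the paper's entire proof is a citation to Remark~2 of Furumoto--Hasegawa, whereas you reprove the formula from scratch via the moduli interpretation of fixed points of $w_N$. Your chain of reductions is sound: fixed non-cuspidal points give $\alpha \in \End(E)$ with $\alpha\bar\alpha = N$ and $\ker\alpha = \ker\bar\alpha$, the divisibility $N \mid \operatorname{tr}(\alpha)$ together with $|\operatorname{tr}(\alpha)| < 2\sqrt{N} < N$ forces $\alpha^2 = -N$, and since $\pm\sqrt{-N}$ share a kernel each admissible $E$ contributes exactly one fixed point, so the count is $\sum_{\mathcal{O}} h(\mathcal{O})$ over orders $\mathcal{O} \ni \sqrt{-N}$ with $\mathcal{O}/(\sqrt{-N})$ cyclic. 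The step you flag as delicate --- that among all orders between $\mathbf{Z}[\sqrt{-N}]$ and $\mathcal{O}_K$ only the ones of discriminant $-4N$ and (when $N \equiv 3 \bmod 4$) $-N$ have cyclic quotient --- is indeed where the work lies for non-squarefree $N$, but your Smith-normal-form computation does close it: writing $N = t^2 n$ with $n$ squarefree, the order of conductor $c$ yields a quotient isomorphic to $\mathbf{Z}/(tcn) \times \mathbf{Z}/(t/c)$ (in the $n \equiv 1,2 \bmod 4$ case, similarly in the other), which is cyclic only at the extremes you name; and your side remarks (no fixed cusps for $N \ge 5$, since a fixed cusp would need denominator $c$ with $c^2 = N$ and the induced action $a \mapsto -a \bmod c$ is fixed-point-free for $c \ge 3$; no automorphism corrections since $N \ge 5$ excludes discriminants $-3, -4$) also check out. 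What the paper's citation buys is precisely skipping this bookkeeping, since Furumoto--Hasegawa's formula covers arbitrary Atkin--Lehner involutions on $X_0(N)$; what your argument buys is a self-contained proof and a transparent explanation of where the two class numbers come from. To be fully rigorous you would need to write out the order classification uniformly (including the $n \equiv 3 \bmod 4$ case, where the conductor of $\mathbf{Z}[\sqrt{-N}]$ is $2t$), but there is no gap in the strategy.
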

\begin{proof}
Apply the formula in Remark 2 of \cite{Furumoto_Hasegawa_1999}.
\end{proof}

To bound $g_{0}^{+}(N)$, we will need an upper bound on $h(D)$.

\begin{lemma}
\label{Lemma:UpperBoundOnClassNumber}
For negative $D$, 
\[
h(D) \le \frac{\sqrt{-D}}{\pi} (\ln(4 |D|) + 2).
\]
\end{lemma}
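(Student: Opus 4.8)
The plan is to combine the analytic class number formula with a crude but sufficient upper bound on the value $L(1,\chi_D)$, where $\chi_D(n) = \left(\frac{D}{n}\right)$ is the Kronecker symbol. Recall Dirichlet's class number formula: for a negative discriminant $D$ (so $D \equiv 0,1 \pmod 4$ and $D$ is not a square) one has
\[
h(D) = \frac{w(D)}{2}\,\frac{\sqrt{-D}}{\pi}\,L(1,\chi_D),
\]
where $w(D)$ is the number of roots of unity in the order of discriminant $D$, so that $w(D) = 2$ for $D < -4$ while $w(-4) = 4$ and $w(-3) = 6$. For the two exceptional discriminants $D = -3, -4$ we have $h(D) = 1$, and the claimed inequality holds by direct substitution. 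Hence it suffices to treat $D < -4$, where the formula reads $h(D) = \frac{\sqrt{-D}}{\pi}\,L(1,\chi_D)$, so that the lemma reduces to the estimate
\[
L(1,\chi_D) \le \ln(4|D|) + 2.
\]

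To prove this, I would bound $L(1,\chi_D) = \sum_{n \geq 1} \chi_D(n)/n$ by partial summation, using that $\chi_D$ is a non-principal real Dirichlet character (non-principal since $D<0$ is not a square; indeed $\chi_D(-1) = -1$) whose period divides $|D|$. Set $q := 4|D|$, which is a period of $\chi_D$ with $\sum_{n=1}^{q}\chi_D(n) = 0$, and write $S(x) = \sum_{n \le x}\chi_D(n)$. Splitting the series at $q$ gives
\[
L(1,\chi_D) = \sum_{n \le q}\frac{\chi_D(n)}{n} + \sum_{n > q}\frac{\chi_D(n)}{n},
\]
where the first sum is at most the harmonic sum $\sum_{n \le q} 1/n \le 1 + \ln q = 1 + \ln(4|D|)$ in absolute value. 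For the tail, since $S(q) = 0$ and $S(x)/x \to 0$, Abel summation yields $\sum_{n > q}\chi_D(n)/n = \int_q^\infty S(t)\,t^{-2}\,dt$, whose absolute value is at most $\big(\sup_t |S(t)|\big)\int_q^\infty t^{-2}\,dt = \sup_t|S(t)|/q$. The key elementary input is the bound $\sup_t |S(t)| \le q/2$: over one period the nonzero values $\chi_D(n) \in \{\pm 1\}$ split into equally many $+1$'s and $-1$'s, so any partial sum is bounded by the number of $+1$'s, which is at most $q/2$, and periodicity extends this to all $t$. The tail is therefore at most $1/2$, and since $L(1,\chi_D) > 0$ these absolute-value estimates combine to the genuine upper bound $L(1,\chi_D) \le 1 + \ln(4|D|) + 1/2 \le \ln(4|D|) + 2$.

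The estimate on $L(1,\chi_D)$ is robust, with room to spare in the constant, so I expect the main point requiring care to be the \emph{analytic input} rather than the final summation: one must apply Dirichlet's formula correctly for \emph{non-fundamental} $D$, where $\chi_D$ is an imprimitive character, and account properly for the unit factor $w(D)$ so that the coefficient is exactly $1$ in the relevant range $D < -4$. A purely combinatorial alternative, bounding $h(D)$ by counting reduced primitive forms $ax^2+bxy+cy^2$ with $|b| \le a \le c$ and $-D = 4ac - b^2$, would avoid the $L$-function entirely, but pinning down the precise constant $\ln(4|D|) + 2$ through the divisibility count on $b$ is messier; for that reason I would favor the class number formula route above.
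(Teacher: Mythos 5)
Your proposal is correct and follows essentially the same route as the paper: Dirichlet's class number formula, splitting $L(1,\chi_D)$ at $4|D|$, bounding the initial segment by the harmonic sum, Abel summation on the tail using that the character sums over a full period vanish, and a separate check for $D=-3,-4$. The only difference is your sharper partial-sum bound $|S(t)|\le 2|D|$ in place of the paper's cruder $|A(x)|\le 4|D|$, which gives a tail of $1/2$ instead of $1$ but the same final constant.
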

\begin{proof}
For negative $D$, the Dirichlet class number formula states that 
\[
h(D) = \frac{w \sqrt{-D}}{2 \pi} L\left(1, \chi_{D}\right)
\]
where $\chi_{D}(m) = \left( \frac{D}{m} \right)$ is the Kronecker symbol and
\[
w = \begin{cases}
4 &\text{if } D = -4, \\
6 &\text{if } D = -3, \\
2 &\text{otherwise.}
\end{cases}
\]
Hence, it suffices to obtain an upper bound on $L\left(1, \chi_{D}\right)$. Let 
$A(x) \colonequals \sum_{1 \le n \le x} \chi_{D}(n)$. 
Since the modulus of $\chi_{D}$ divides $4 D$, the sum of any $4 |D|$ consecutive values of 
$\chi_{D}(n)$ is zero, so
\[
|A(x)| \le 4 |D|.
\]
Using the definition of $L\left(1, \chi_{D}\right)$ and the Abel summation formula,
\begin{align*}
L\left( 1, \chi_{D} \right) &= \sum_{n = 1}^{\infty} \frac{\chi_{D}(n)}{n} \\
&= \sum_{n = 1}^{4 D} \frac{\chi_{D}(n)}{n} + \sum_{n > 4 D} \frac{\chi_{D}(n)}{n} \\
&= \sum_{n = 1}^{4 D} \frac{\chi_{D}(n)}{n} - \frac{A(4 D)}{4 D} - \int_{4 D}^{\infty} \frac{A(t)}{t^{2}} \, dt.
\end{align*} (Note that the series for $L(1,\chi_D)$ converges because $\chi_D$ is nontrivial.)
Since $A(4 D) = 0$, the middle term vanishes. Using the upper bounds $|\chi_{D}(n)| \le 1$ and $A(x) \le 4 |D|$
yields
\begin{align*}
L\left( 1, \chi_{D} \right) &\le \sum_{n = 1}^{4 |D|} \frac{1}{n} + 4 |D| \int_{4 |D|}^{\infty} \frac{1}{t^{2}} \, d t \\
&= \sum_{n = 1}^{4 |D|} \frac{1}{n} + 1 \\
&\le \ln(4 |D|) + 2
\end{align*}
from the Abel summation formula again. Substituting into the Dirichlet class number formula shows that for $D 
\not \in \{ -3, -4 \}$,
\[
h(D) \le \frac{\sqrt{-D}}{\pi} (\ln(4 |D|) + 2).
\]
We then manually check that this holds for $D \in \{ -3, -4 \}$ as well.
\end{proof}

\begin{proposition}
\label{Proposition:LowerBoundg0Np}
We have 
\[
g_{0}^{+}(N) \ge \frac{N - 5 \sqrt{N} + 4}{24} - \frac{\sqrt{N}}{\pi}(\ln(16 N) + 2).
\]
\end{proposition}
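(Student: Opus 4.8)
The plan is to obtain the exact relation between $g_{0}^{+}(N)$, $g_{0}(N)$, and $\nu(N)$ via the Riemann--Hurwitz formula, and then to feed in the lower bound for $g_{0}(N)$ from Theorem~\ref{Theorem:LowerBoundg0N} together with an upper bound for $\nu(N)$ assembled from Theorem~\ref{Theorem:nuNComputation} and Lemma~\ref{Lemma:UpperBoundOnClassNumber}. The quotient map $\pi \colon X_{0}(N) \to X_{0}^{+}(N)$ has degree $2$, and its ramification points are precisely the $\nu(N)$ fixed points of $w_{N}$, each of ramification index $2$. Thus Riemann--Hurwitz reads $2 g_{0}(N) - 2 = 2\bigl(2 g_{0}^{+}(N) - 2\bigr) + \nu(N)$, which rearranges to
\[
g_{0}^{+}(N) = \frac{2 g_{0}(N) + 2 - \nu(N)}{4}.
\]
It then suffices to bound the two contributions on the right separately.

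For the genus term, I would substitute the bound $g_{0}(N) \ge (N - 5\sqrt{N} - 8)/12$ from Theorem~\ref{Theorem:LowerBoundg0N}, which gives
\[
\frac{2 g_{0}(N) + 2}{4} \ge \frac{N - 5\sqrt{N} - 8}{24} + \frac{1}{2} = \frac{N - 5\sqrt{N} + 4}{24}.
\]
This already produces the first term of the claimed inequality exactly, so no slack is lost here.

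For the ramification term, I would use Theorem~\ref{Theorem:nuNComputation} to write $\nu(N) \le h(-4N) + h(-N)$ (the bound holding a fortiori in the case $N \not\equiv 3 \pmod{4}$, where $\nu(N) = h(-4N)$), and then apply Lemma~\ref{Lemma:UpperBoundOnClassNumber} to each class number, giving $h(-4N) \le \tfrac{2\sqrt{N}}{\pi}(\ln(16N) + 2)$ and $h(-N) \le \tfrac{\sqrt{N}}{\pi}(\ln(4N) + 2)$. The goal is to show $\tfrac{1}{4}\nu(N) \le \tfrac{\sqrt{N}}{\pi}(\ln(16N) + 2)$, and after dividing out $\tfrac{\sqrt{N}}{\pi}$ this reduces to the elementary inequality $\ln(4N) \le 2\ln(16N) + 2$, which holds trivially. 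Combining the two bounds yields the stated lower bound on $g_{0}^{+}(N)$.

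The only genuine obstacle is the final collapse of the $\nu(N)$ contribution into the single clean term $\tfrac{\sqrt{N}}{\pi}(\ln(16N)+2)$: the two class numbers $h(-4N)$ and $h(-N)$ carry different logarithmic arguments and must be folded together through a deliberately generous estimate, and one must simultaneously verify that the coarser inequality still suffices in the $N \not\equiv 3 \pmod{4}$ case. Since the target bound is used only to produce a finite list of candidate levels, this looseness is harmless, and the routine verification that $\ln(4N) \le 2\ln(16N) + 2$ closes the argument.
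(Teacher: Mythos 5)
Your proposal is correct and follows exactly the route the paper takes: Riemann--Hurwitz for the degree-$2$ map gives $2g_0(N)-2 = 2(2g_0^+(N)-2)+\nu(N)$, and the stated bound follows by substituting Theorem~\ref{Theorem:LowerBoundg0N} for $g_0(N)$ and bounding $\nu(N)$ via Theorem~\ref{Theorem:nuNComputation} and Lemma~\ref{Lemma:UpperBoundOnClassNumber}. The paper leaves the final combination as a one-line remark, whereas you have carried it out explicitly (including the reduction to $\ln(4N)\le 2\ln(16N)+2$ and the a fortiori case $N\not\equiv 3\pmod 4$), and your arithmetic checks out.
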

\begin{proof}
From the Riemann-Hurwitz formula applied to the degree-$2$ morphism $X_{0}(N) \to X_{0}^{+}(N)$, we have
\begin{equation}\label{eq:g0N and g0+N}
    2 g_{0}(N) - 2 = 2 (2 g_{0}^{+}(N) - 2) + \nu(N),
\end{equation}
and we are done by combining  
Theorem \ref{Theorem:LowerBoundg0N},
Theorem \ref{Theorem:nuNComputation}, and
Lemma \ref{Lemma:UpperBoundOnClassNumber}.
\end{proof}

\begin{proposition}
The complete list of $N$ for which $g_{0}^{+}(N) \le 6$ is as follows. 

\begin{table}[h!]
    \centering
    \begin{tabular}{cl}
        \toprule
        $g_{0}(N)^{+}$ & \multicolumn{1}{c}{$N$} \\
        \midrule
        $0$ & $2, 3, 5, 7, 11, 13, 17, 19, 23, 29, 31, 41, 47, 59, 71$ \\
        $1$ & $37, 43, 53, 61, 79, 83, 89, 101, 131$\\
        $2$ & $67, 73, 103, 107, 167, 191$\\
        $3$ & $97, 109, 113, 127, 139, 149, 151, 179, 239$\\
        $4$ & $137, 173, 199, 251, 311$\\
        $5$ & $157, 181, 227, 263$\\
        $6$ & $163, 197, 211, 223, 269, 271, 359$\\
        \bottomrule
    \end{tabular}
    \caption{The prime levels $N$ such that $X_0^+(N)$ has genus $\leq 6$}
    \label{tab:prime_levels}
\end{table}

\begin{table}[H]
    \centering
    \begin{tabular}{cl}
        \toprule
        $g_{0}(N)^{+}$ & \multicolumn{1}{c}{$N$} \\
        \midrule
        $0$ & $4, 6, 8, 9, 10, 12, 14, 15, 16, 18, 20, 21, 24, 25, 26, 27, 32, 35, 36, 39, 
        49, 50$ \\
        $1$ & $22, 28, 30, 33, 34, 38, 40, 44, 45, 48, 51, 54, 55, 56, 63, 64, 65, 75, 81, 
        95, 119$\\
        $2$ & $42, 46, 52, 57, 62, 68, 69, 72, 74, 77, 80, 87, 91, 98, 111, 121, 125, 143$\\
        $3$ & $58, 60, 66, 76, 85, 86, 96, 99, 100, 104, 128, 169$\\
        $4$ & $70, 82, 84, 88, 90, 92, 93, 94, 108, 115, 116, 117, 129, 135, 147, 155, 159, 
        161, 215$\\
        $5$ & $78, 105, 106, 110, 112, 122, 123, 133, 134, 144, 145, 146, 171, 175, 185, 209$\\
        $6$ & $118, 124, 136, 141, 152, 153, 164, 183, 203, 221, 299$\\
        \bottomrule
    \end{tabular}
    \caption{The composite levels $N$ such that $X_0^+(N)$ has genus $\leq 6$}
    \label{tab:composite_levels}
\end{table}
\end{proposition}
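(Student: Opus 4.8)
The plan is to split the claim into a \emph{finiteness reduction}, showing that $g_0^+(N) > 6$ for all $N$ beyond an explicit bound $N_0$, followed by a direct finite verification for the remaining $N \le N_0$. The first part rests entirely on Proposition \ref{Proposition:LowerBoundg0Np}. Writing
\[
f(N) := \frac{N - 5\sqrt{N} + 4}{24} - \frac{\sqrt{N}}{\pi}\bigl(\ln(16N) + 2\bigr),
\]
so that $g_0^+(N) \ge f(N)$, I would first show that $f$ is eventually increasing and then exhibit an explicit $N_0$ with $f(N_0) > 6$; monotonicity then propagates the inequality $f(N) > 6$, and hence $g_0^+(N) > 6$, to all $N \ge N_0$.

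To make this rigorous I would compute
\[
f'(N) = \frac{1}{24} - \frac{5}{48\sqrt{N}} - \frac{\ln(16N) + 4}{2\pi\sqrt{N}},
\]
and observe that the two negative terms are $O(\ln N / \sqrt{N})$, so $f'(N) \to 1/24 > 0$. Clearing $\sqrt N$ shows $f'(N) > 0$ exactly when $\tfrac{\sqrt N}{24} > \tfrac{5}{48} + \tfrac{\ln(16N)+4}{2\pi}$, and an elementary check confirms this holds for all $N \ge 4000$; thus $f$ is strictly increasing on $[4000,\infty)$. It then suffices to evaluate $f$ at one threshold: a direct computation gives $f(13500) > 6$ (the crossover in fact sits near $N \approx 1.3\times 10^4$, since the $\sqrt N\ln N$ error term remains comparable to the main term $N/24$ until $N$ is fairly large). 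Since $13500 > 4000$, monotonicity yields $g_0^+(N) > 6$ for every $N \ge N_0 := 13500$.

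For the second part I would compute $g_0^+(N)$ exactly for each $N \le N_0$. Rearranging the Riemann--Hurwitz relation \eqref{eq:g0N and g0+N} gives
\[
g_0^+(N) = \frac{g_0(N)}{2} + \frac{1}{2} - \frac{\nu(N)}{4},
\]
so it is enough to evaluate $g_0(N)$ from the classical genus formula for $X_0(N)$ and $\nu(N)$ from Theorem \ref{Theorem:nuNComputation} (equivalently, to read both off a computer algebra system). Running this over the finite range $5 \le N \le N_0$, recording exactly those $N$ with $g_0^+(N) \le 6$, and sorting the output into prime and composite levels produces the two tables; the small cases $N < 5$ are checked by hand.

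The main obstacle is quantitative rather than conceptual: because the bound of Proposition \ref{Proposition:LowerBoundg0Np} is weak, the search range $N \le N_0$ is large (of order $10^4$), and the whole reduction hinges on a clean, rigorous treatment of the transcendental inequality $f(N) > 6$. The monotonicity argument above is precisely what reduces this to finitely many evaluations; the remaining genus computations are routine and computationally trivial. The one point demanding care is confirming that $N_0$ lies in the region where $f$ is already increasing (i.e.\ $N_0 > 4000$), so that a single evaluation of $f$ at $N_0$ genuinely controls all larger $N$.
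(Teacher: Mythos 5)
Your proposal is correct and follows essentially the same route as the paper: use the lower bound of Proposition \ref{Proposition:LowerBoundg0Np} to reduce to an explicit finite range (the paper uses $N \le 13300$, you use $N \le 13500$), then compute $g_0^+(N)$ exactly on that range via $g_0(N)$, Theorem \ref{Theorem:nuNComputation}, and \eqref{eq:g0N and g0+N}. The only difference is that you spell out the monotonicity argument justifying that a single evaluation of the lower bound controls all larger $N$, a detail the paper leaves implicit; your numerics check out.
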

\begin{proof}
The lower bound in Proposition \ref{Proposition:LowerBoundg0Np} exceeds $6$ for $N > 13300$. 
For $N \le 13300$, we compute $g_{0}^{+}(N)$ exactly from the computation of $g_0(N)$ in~\cite[§\,3.1]{DiamondShurman}, Theorem~\ref{Theorem:nuNComputation} (the class numbers of imaginary quadratic fields can be computed efficiently using Magma, e.g.\ via the identification with the number of equivalence classes of reduced integral binary quadratic forms) and~\eqref{eq:g0N and g0+N} and find the $N$ for which $g_{0}^{+}(N) \le 6$.
\end{proof}

\section{Results and data}\label{sec:Models}

In the following tables, we display our choice of models for $X_{0}^{+}(N)$ for which we applied
\texttt{QCModAffine} successfully. We use $D$ when the rational point represents a
Heegner point of discriminant $D$. To compute the coordinates of a CM point of discriminant $D$, we compute the Heegner forms of elliptic curves with CM by an order of discriminant $D$, compute the corresponding $\tau$ in the upper half plane and $q = \exp(2\pi i \tau)$ and plug it into the $q$-expansions of the basis of the cusp forms giving the canonical embedding. For certain values of $D$ or $N$, the series did not converge, but if there is only one such $D$ for a given $N$, all $\Q$-points on $X_0^+(N)$ are known, and since we know their modular interpretation for all but one of them, we can also determine the interpretation of the remaining one. If all cusp forms vanish at the CM point corresponding to $D$ (this happens for $D = -3, -4$ in the case of $X_0^+(157)$ and $X_0^+(181)$, and $D = -3, -163$ in the case of $X_0^+(163)$), we compute the derivatives of $q$-expansions of the basis of the cusp forms giving the canonical embedding and divide by the value of one. In this way, we get the coordinates of the CM points with $D = -4$ and $-163$.

Use $d_{\infty}$ to denote the degree of the residue field at infinity. In the first table, we list $p$, $d_\infty$, $d_x$, $d_y$ and the runtime for all affine patches used.
The computations were performed on a six core server with $64$\,GB of RAM and processor Intel(R) Xeon(R) W-2133 CPU @ 3.60\,GHz.

\begin{center}

\end{center}

Galbraith has a small typo in his model for $X_{0}^{+}(157)$. His third equation should have leading coefficient 
$2 w^{2}$ instead of $w^{2}$. The following linear map sends our model to his model
\begin{align*}
v &= 3 V - W - X + Y + Z \\
w &= 2 V - 2 X - 2 Y + Z \\
x &= -4 V + 3 X - Y - 3 Z \\
y &= -V + X + Y - Z \\
z &= V - X + Z
\end{align*}
and for reference, the corrected Galbraith equations should be as follows:
\begin{align*}
&-v x + 2 w x - 5 v y - 8 v z + 5 w z - 2 x z + 5 y z + z^2 = 0, \\
&w^2 + w x + 4 w y - x y + y^2 + v z + 4 w z - 2 x z - 9 y z - 12 z^2 = 0, \\
&\mathbf{2} w^2 + 3 w x + x^2 - v y + 9 w y + 2 x y + 6 y^2 + v z + 12 w z + 5 x z + 3 y z - z^2 = 0.
\end{align*}

\begin{center}

\end{center}

\printbibliography
\end{document}